\documentclass[10pt,a4paper]{elsarticle}
\usepackage[T1]{fontenc}
\usepackage{amsthm}
\usepackage{amsmath}
\usepackage{bm}
\usepackage{color}
\usepackage{hyperref}
\newtheorem{theorem}{Theorem}
\newtheorem{corollary}{Corollary}[theorem]

\newtheorem{remark}{Remark}

\newcommand{\K}{{\bm K}}
\renewcommand{\P}{{\bm P}}
\renewcommand{\H}{{\bm H}}
\newcommand{\R}{{\bm R}}
\newcommand{\I}{{\bm I}}
\newcommand{\tr}{\text{tr}}
\newcommand{\PK}{\P_\K}
\begin{document}

	\begin{frontmatter}
		
		
		
		{\title{Minimizing effects of the Kalman gain on Posterior covariance  Eigenvalues, the characteristic polynomial and symmetric polynomials of Eigenvalues}}

		
		\author{Johannes Krotz\corref{cor1} \fnref{label1}}
		
		\affiliation[label1]{organization={Department of Mathematics, University of Tennessee Knoxville},
			city={Knoxville},
			postcode={37996}, 
			state={TN},
			country={USA}}

		\begin{abstract}
			The Kalman gain is commonly derived as the minimizer of the trace of theposterior covariance. It is known that it also minimizes the determinant of the posterior covariance. I will show that it also minimizes the smallest Eigenvalue $\lambda_1$ and the chracteristic polynomial on $(-\infty,\lambda_1)$ and is critical point to all symmetric polynomials of the Eigenvalues, minimizing some. This expands the range of uncertainty measures for which the Kalman Filter is optimal.
		\end{abstract}
		
		\begin{keyword}
			Kalman Filter	\sep Uncertainty measures \sep symetric Polynomials
			
			
			
		\end{keyword}
		
	\end{frontmatter}

	In a Kalman filter algorithm the Kalman gain is defined by 
	\begin{align}
		\K^* = \P\H^\top (\H\P\H^\top\R)^{-1}, \label{eq:K}
		\end{align}
		where $\P$ is the covariance matrix of prior, $\R$ is the covariance matrix of the Likelihood and $\H$ is the measurement operator. \cite{asch_data_2016,sarkka2013bayesian}
		The posterior covariance matrix $\P_{\K^*}$ is defined through 
		\begin{align}
			\PK = (\I-\K\H)\P(\I-\K\H)^\top +\K\R\K^\top. \label{eq:PK}
		\end{align} 
		evaluated at $\K^*$, where $\I$ is the identity matrix. Note that as covariance matrices $\P$ and $\R$ are symmetric and striclty positive, properties which $\PK$ inherits.\\
		The Kalman gain defined in equation \eqref{eq:K} is often derived as the minimizer of the total posterior variance, i.e. the trace of $P_K$.\cite{asch_data_2016,sarkka2013bayesian,jazwinski_stochastic_1970,todling_estimation_1999,laue_matrixcalculusorg_2020}. In other words \begin{align}
			\K^* = \arg\min_\K \tr(\PK)
		\end{align}
		It was shown in \cite{bach2021proof} that $\K^*$ also minimizes the posterior generalized variance, defined as the determinant of $\PK$, i.e.
		\begin{align}
		\K^* =	\arg\min_\K \det(\PK). 
		\end{align}
		Let $\Phi(\PK,\lambda) := \det(\lambda \I - \PK) = \prod_{i=1}^n(\lambda - \lambda^\K_i):= \sum_{i=0}^{n} a_i^\K \lambda^i$ be the characteristic Polynomial of $\PK$ and $0<\lambda^\K_1\le\dots\le\lambda^\K_n$ be its roots, the Eigenvalues of $\PK$. Let $\Lambda^\K =\{\lambda^\K_i\}_{i=1}^n$. 
		
		\begin{theorem} \label{thm:1}
			Let $\lambda\notin\Lambda^{\K^*}$ and $g_\lambda(\K):= ||\Phi(\PK,\lambda)||$. The Kalman gain $K^*$ is a critical point of $g_\lambda$ and, if $\lambda<\lambda^{K^*}_1$ \begin{align}
				K^* = \arg\min_\K g_\lambda(K).
			\end{align}
		\end{theorem}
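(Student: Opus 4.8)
The plan is to reduce both claims to a single algebraic identity that rewrites the posterior covariance as a constant matrix plus a manifestly positive semidefinite quadratic term in $\K-\K^*$. Writing $S := \H\P\H^\top + \R$ for the innovation covariance (symmetric positive definite, since $\R\succ 0$ and $\H\P\H^\top\succeq 0$) and $\P_{\K^*} = (\I-\K^*\H)\P$, and using $\K^* = \P\H^\top S^{-1}$, I would first verify by directly expanding \eqref{eq:PK} that
\begin{align}
	\P_\K = \P_{\K^*} + (\K-\K^*)\,S\,(\K-\K^*)^\top. \label{eq:cs}
\end{align}
This is the standard ``completed square'' of the Kalman update; the cross terms cancel precisely because $\K^* S = \P\H^\top$. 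Once \eqref{eq:cs} is available, both assertions follow with little extra work, so the real content of the theorem is packaged in this identity.

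For the critical-point claim I would differentiate \eqref{eq:cs} in $\K$. The map $\K\mapsto(\K-\K^*)S(\K-\K^*)^\top$ is quadratic in $\K-\K^*$, so its directional derivative in a direction $V$ equals $V S(\K-\K^*)^\top + (\K-\K^*)S V^\top$, which vanishes at $\K=\K^*$. Hence the whole matrix map $\K\mapsto\P_\K$ has zero derivative at $\K^*$. Because $\lambda\notin\Lambda^{\K^*}$, we have $\det(\lambda\I-\P_{\K^*})\neq 0$, so by continuity $\Phi(\P_\K,\lambda)$ keeps a constant sign on a neighborhood of $\K^*$ and $g_\lambda=\|\Phi\|$ coincides there with $\pm\Phi$, a smooth function of $\P_\K$. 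Jacobi's formula for the determinant together with the chain rule then gives, in direction $V$, a derivative proportional to $\det(\lambda\I-\P_{\K^*})\,\tr\!\big[(\lambda\I-\P_{\K^*})^{-1}(-D\P_\K[V])\big]=0$, so $\K^*$ is a critical point of $g_\lambda$.

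For the global-minimum claim when $\lambda<\lambda^{\K^*}_1$, I would invoke eigenvalue monotonicity. The term $(\K-\K^*)S(\K-\K^*)^\top$ in \eqref{eq:cs} is positive semidefinite, so Weyl's inequality yields $\lambda^\K_i\ge\lambda^{\K^*}_i$ for every $i$ (eigenvalues ordered increasingly). If $\lambda<\lambda^{\K^*}_1\le\lambda^{\K^*}_i\le\lambda^\K_i$, then every factor $\lambda^\K_i-\lambda$ is strictly positive, whence
\begin{align}
	g_\lambda(\K)=\prod_{i=1}^n|\lambda-\lambda^\K_i|=\prod_{i=1}^n(\lambda^\K_i-\lambda)\ \ge\ \prod_{i=1}^n(\lambda^{\K^*}_i-\lambda)=g_\lambda(\K^*),
\end{align}
establishing that $\K^*$ is a global minimizer.

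The computations are all routine; the only genuinely delicate point is the non-smoothness of $g_\lambda=\|\Phi\|$ at the zeros of $\Phi$, which is exactly why the hypothesis $\lambda\notin\Lambda^{\K^*}$ is imposed: it confines us to a region where $g_\lambda$ is differentiable and the sign of $\Phi$ is locally (and, in the ordered case $\lambda<\lambda^{\K^*}_1$, globally) constant. The main conceptual obstacle is simply spotting identity \eqref{eq:cs}; after that the critical-point and minimization statements reduce, respectively, to the vanishing of a quadratic's derivative at its vertex and to Weyl monotonicity.
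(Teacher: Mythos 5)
Your proof is correct, and it takes a genuinely different route from the paper's. The paper works locally: it differentiates $\log g_\lambda$, obtains (via Jacobi's formula, stated informally as matrix calculus) a derivative proportional to $-(\lambda\I-\PK)^{-1}\tfrac{d\PK}{d\K}$, asserts that this vanishes only where $\tfrac{d\PK}{d\K}=0$, i.e.\ at $\K^*$, and then argues that for $\lambda<\lambda_1^{\K^*}$ the positive definiteness of $-(\lambda\I-\P_{\K^*})^{-1}$ ``preserves the minimizing properties'' --- a step that is left rather informal. You instead make the completed square
\begin{align}
	\PK = \P_{\K^*} + (\K-\K^*)\,S\,(\K-\K^*)^\top, \qquad S=\H\P\H^\top+\R,
\end{align}
the centerpiece. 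This buys you two things the paper's argument does not deliver cleanly: (i) the critical-point claim becomes the trivial observation that a quadratic has vanishing derivative at its vertex, fed through Jacobi's formula (and you correctly note that $\lambda\notin\Lambda^{\K^*}$ is what makes $g_\lambda=\|\Phi\|$ locally smooth); and (ii) the minimization claim becomes a \emph{global} statement, since the identity gives $\PK-\P_{\K^*}$ positive semidefinite for every $\K$, hence $\lambda_i^\K\ge\lambda_i^{\K^*}$ for all $i$ by Weyl monotonicity, and the product $\prod_i(\lambda_i^\K-\lambda)$ of strictly positive, termwise-dominating factors is minimized at $\K^*$. The paper's proof, read strictly, only establishes a critical point plus a heuristic for why it is a minimum. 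As a bonus, your Loewner-order inequality $\PK\ge\P_{\K^*}$ immediately yields the paper's first corollary ($\K^*$ minimizes $\lambda_1^\K$) without the contradiction argument. The only cosmetic gap is strictness of the minimum: since $S$ is positive definite, $\K\neq\K^*$ forces $\tr\bigl((\K-\K^*)S(\K-\K^*)^\top\bigr)>0$, so at least one eigenvalue strictly increases and the inequality in your final display is strict; worth one sentence if you want $\arg\min$ to denote a unique minimizer.
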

		\begin{proof}
		Let $\lambda \notin \Lambda^{K}$ and let $f(\K,\lambda):= \log g_\lambda$.
		 Using matrix calculus it follows that 
		 \begin{align}
				\frac{df_\lambda(\K)}{\K} = -\left(\lambda \I - \PK\right)^{-1}\frac{d\PK}{d\K}.
			\end{align}
			The equaton $\frac{df_\lambda(\K)}{\K} = 0$ is solved if and only if $\frac{d\PK}{d\K} = 0$ is solved. The latter however is only true for $\K=\K^*$, implying that $K^*$ is a critical point of $f_\lambda$ and hence of $g_\lambda$. \\
			For $\lambda < \lambda^{K^*}_1$ the term $-\left(\lambda \I - \P_{\K^*}\right)^{-1}$ is positive definite, meaning the minimizing properties of $\frac{d\P_{\K^*}}{d\K}$ are preserved. 
		\end{proof}
		\begin{corollary}
			$\K^*$ minimizes the smallest Eigenvalue
			\begin{align}
				\K^* = \arg\min_\K(\lambda_1^{\K}).
			\end{align}
		\end{corollary}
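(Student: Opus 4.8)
The plan is to read the corollary off Theorem \ref{thm:1} by locating where the eigenvalues of $\P_\K$ are \emph{forbidden} to lie, rather than by differentiating $\lambda_1^\K$ directly (which is only piecewise smooth in $\K$). The key reformulation is that $\lambda_1^{\K^*} = \arg\min_\K \lambda_1^\K$ is equivalent to the statement that no posterior covariance $\P_\K$ has an eigenvalue in the open half-line $(-\infty,\lambda_1^{\K^*})$; since the smallest eigenvalue is the infimum of the (real, positive) spectrum of the symmetric matrix $\P_\K$, ruling out eigenvalues below $\lambda_1^{\K^*}$ is exactly the same as the bound $\lambda_1^\K \ge \lambda_1^{\K^*}$.

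The core step is to fix an arbitrary $\lambda < \lambda_1^{\K^*}$ and apply Theorem \ref{thm:1}. Because $\lambda$ sits strictly below every eigenvalue of $\P_{\K^*}$, we have $\lambda \notin \Lambda^{\K^*}$, so every factor of $\Phi(\P_{\K^*},\lambda) = \prod_{i=1}^n(\lambda - \lambda_i^{\K^*})$ is nonzero and $g_\lambda(\K^*) = |\Phi(\P_{\K^*},\lambda)| > 0$. Theorem \ref{thm:1} states that for such $\lambda$ the gain $\K^*$ globally minimizes $g_\lambda$, whence $g_\lambda(\K) \ge g_\lambda(\K^*) > 0$ for \emph{every} gain $\K$. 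In particular $\Phi(\P_\K,\lambda) \ne 0$, i.e. $\lambda$ is not an eigenvalue of $\P_\K$.

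Letting $\lambda$ range over all of $(-\infty,\lambda_1^{\K^*})$ then shows that no $\P_\K$ has an eigenvalue below $\lambda_1^{\K^*}$. Equivalently, by contraposition: if some $\P_{\K_0}$ possessed an eigenvalue $\mu < \lambda_1^{\K^*}$, then choosing $\lambda = \mu$ would force $g_\mu(\K_0) = 0$, contradicting $g_\mu(\K_0) \ge g_\mu(\K^*) > 0$. Since all eigenvalues of $\P_\K$ are real and positive, its smallest eigenvalue satisfies $\lambda_1^\K \ge \lambda_1^{\K^*}$, and as this holds for every $\K$ we obtain $\K^* \in \arg\min_\K \lambda_1^\K$, which is the claim.

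The step I expect to be the main obstacle is the passage from the pointwise-in-$\lambda$ conclusions of Theorem \ref{thm:1} to the global lower bound on $\lambda_1^\K$; this works cleanly only because symmetry of $\P_\K$ guarantees a real spectrum and because the strict inequality $g_\lambda(\K^*) > 0$ must be maintained even as $\lambda \uparrow \lambda_1^{\K^*}$, where $g_\lambda(\K^*) \downarrow 0$. As an independent sanity check one can bypass Theorem \ref{thm:1} entirely via the completing-the-square identity $\PK = \P_{\K^*} + (\K - \K^*)(\H\P\H^\top + \R)(\K - \K^*)^\top$, which exhibits $\PK \succeq \P_{\K^*}$ in the Loewner order and yields $\lambda_1^\K \ge \lambda_1^{\K^*}$ directly by Weyl's monotonicity theorem.
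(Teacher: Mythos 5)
Your main argument is essentially the paper's own proof: both derive $\lambda_1^{\K}\ge\lambda_1^{\K^*}$ by contradiction from the fact that Theorem \ref{thm:1} makes $\K^*$ a global minimizer of $g_\lambda$ for every $\lambda<\lambda_1^{\K^*}$. Your version is in fact slightly tighter --- you evaluate directly at the offending eigenvalue $\mu\in\Lambda^{\K_0}$, which is legitimate because Theorem \ref{thm:1} only excludes $\lambda\in\Lambda^{\K^*}$, so you get $g_\mu(\K_0)=0<g_\mu(\K^*)$ immediately and do not need the paper's continuity step that perturbs $\lambda_1^{\K}$ to a point outside both spectra. Your closing sanity check is, however, a genuinely different route: the identity $\PK=\P_{\K^*}+(\K-\K^*)(\H\P\H^\top+\R)(\K-\K^*)^\top$ gives $\PK\succeq\P_{\K^*}$ in the Loewner order, and Weyl monotonicity then yields minimization of \emph{every} ordered eigenvalue $\lambda_i^{\K}$, not just $\lambda_1^{\K}$, without invoking Theorem \ref{thm:1} at all; that argument is more elementary and strictly stronger than what the corollary asserts.
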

		\begin{proof}
			Let $\K\neq \K^*$. Suppose $\lambda_1^{\K^*}>\lambda_1^{\K}$. Then $||\Phi(\PK),\lambda_1^\K||=0<||\Phi(\P_{K^*}),\lambda_1^\K||$. By continuity of these characteristic Polynomials there is $\lambda<\lambda_1^\K<\lambda_1^{K^*}$, i.e. $\lambda\notin \Lambda^\K \cup \Lambda^{\K^*}$ such that $||\Phi(\PK),\lambda||<||\Phi(\P_{K^*}),\lambda||$. This contradicts Theorem \ref{thm:1}.
		\end{proof}
		\begin{corollary}
			Let $\Phi(\PK,\lambda) := \sum_{i=0}^n a_i^\K \lambda^i$. Then $\K^*$ is critical point of the map $\K \mapsto ||a_i||^\K$ for all $i=1,\dots,n$ and minimizer for even $i$:\begin{align}
				\K^* = \arg\min_\K ||a_i^\K|| & & \text{for} \mod(i,2)=0.
			\end{align}
		\end{corollary}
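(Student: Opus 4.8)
The plan is to sidestep the polynomial-by-polynomial estimate of Theorem~\ref{thm:1} and instead establish a Loewner-order (positive semidefinite) domination of $\PK$ by $\P_{\K^*}$, from which a statement about every coefficient follows at once. First I would complete the square in \eqref{eq:PK}: writing $S := \H\P\H^\top + \R \succ 0$ and $B := \P\H^\top$ (so $\H\P = B^\top$ by symmetry of $\P$), a direct expansion gives
\begin{align}
\PK = \P - BS^{-1}B^\top + (\K-\K^*)\,S\,(\K-\K^*)^\top, \nonumber
\end{align}
with $\K^* = BS^{-1}$ as in \eqref{eq:K} and $\P_{\K^*} = \P - BS^{-1}B^\top$. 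Since $S \succ 0$, the last term is positive semidefinite, so $\PK \succeq \P_{\K^*}$ in the Loewner order for every $\K$, with equality only at $\K^*$. Note this also shows directly that the first-order variation of $\PK$ vanishes at $\K^*$, recovering the fact $\tfrac{d\PK}{d\K}\big|_{\K^*}=0$ used throughout.

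Second, I would convert this matrix inequality into the eigenvalue facts needed for the coefficients. By Weyl's monotonicity theorem, $\PK \succeq \P_{\K^*}$ yields $\lambda_k^\K \ge \lambda_k^{\K^*}$ for every $k$ (this simultaneously reproves the Corollary on the smallest eigenvalue). Because $\PK$ and $\P_{\K^*}$ are strictly positive definite, all eigenvalues are positive, and each elementary symmetric polynomial $e_m$ is strictly increasing in each argument on the positive orthant (its partials are elementary symmetric polynomials of the remaining positive arguments). Hence $e_m(\Lambda^\K) \ge e_m(\Lambda^{\K^*})$ for all $m$.

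Third, I would connect this to the coefficients through Vieta's formulas, $a_i^\K = (-1)^{n-i}e_{n-i}(\Lambda^\K)$, so that $||a_i^\K|| = e_{n-i}(\Lambda^\K)$. The previous step then gives $||a_i^\K|| \ge ||a_i^{\K^*}||$ for \emph{every} $i$, i.e.\ $\K^* = \arg\min_\K ||a_i^\K||$, which in particular covers all even $i$. Criticality is then immediate: every eigenvalue of $\P_{\K^*}$ is strictly positive, so $a_i^{\K^*} = (-1)^{n-i}e_{n-i}(\Lambda^{\K^*}) \neq 0$, whence $||a_i||$ is differentiable at $\K^*$ and a global minimizer is a critical point. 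Equivalently, criticality follows exactly as in Theorem~\ref{thm:1}: each $a_i$ is a smooth function of the entries of $\PK$, so by the chain rule $\tfrac{d}{d\K}a_i^\K = \bigl(\partial a_i/\partial\PK\bigr)\!:\!\tfrac{d\PK}{d\K}$, which vanishes at $\K^*$.

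The main obstacle is precisely the passage from \emph{critical point} to \emph{minimizer}, which the Theorem~\ref{thm:1} route cannot deliver on its own. For each fixed $\lambda$, $g_\lambda(\K)\ge g_\lambda(\K^*)$ is a single scalar inequality, and after the substitution $\lambda=-t$ it only asserts $\sum_i\bigl(e_{n-i}(\Lambda^\K)-e_{n-i}(\Lambda^{\K^*})\bigr)t^i\ge 0$ for $t\ge 0$; this does not force the coefficients to be individually nonnegative (for instance $1-t+t^2\ge 0$ on $t\ge 0$ despite a negative middle coefficient), even when combined with the second-order information that $\K^*$ is critical. The completion of the square is what removes this obstacle, and it does so for every coefficient. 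The only remaining care is the sign/parity bookkeeping in $a_i^\K=(-1)^{n-i}e_{n-i}$, which reveals that the restriction to even $i$ in the statement is unnecessary once one works with $||a_i^\K||$; I would therefore state and prove the stronger conclusion that $\K^*$ minimizes $||a_i^\K||$ for all $i=1,\dots,n$.
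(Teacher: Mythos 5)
Your proposal is correct, and it takes a genuinely different route from the paper's. The paper stays inside the framework of Theorem~\ref{thm:1}: for each coefficient $a_j$ it forms the auxiliary polynomial $\Psi_j$, picks a root $\mu_j$ of $\Psi_j$ so that $\|\Phi(\PK,\mu_j)\|/\|\mu_j\|^j$ collapses to $\|a_j^\K\|$, and then needs $\mu_j<\lambda_1^{\K^*}$ (secured only for even $j$, via the sign alternation of the coefficients) to invoke the minimization half of Theorem~\ref{thm:1}. You instead complete the square in \eqref{eq:PK} to get $\PK=\P_{\K^*}+(\K-\K^*)S(\K-\K^*)^\top$ with $S=\H\P\H^\top+\R$ positive definite (your algebra checks out, using $S\K^{*\top}=B^\top$ and $\K^*S\K^{*\top}=BS^{-1}B^\top$), so that $\PK-\P_{\K^*}$ is positive semidefinite for every $\K$; Weyl monotonicity gives $\lambda_k^\K\ge\lambda_k^{\K^*}$ for every $k$, monotonicity of the elementary symmetric polynomials on the positive orthant gives $e_m(\Lambda^\K)\ge e_m(\Lambda^{\K^*})$, and Vieta's formulas $\|a_i^\K\|=e_{n-i}(\Lambda^\K)$ finish the job. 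What your route buys: (i) the stronger conclusion that $\K^*$ minimizes $\|a_i^\K\|$ for \emph{all} $i$, exposing the parity restriction in the statement as an artifact of the method rather than of the result; (ii) one-line rederivations of the trace, determinant, and smallest-eigenvalue statements, and even of the minimization half of Theorem~\ref{thm:1} itself, since $\|\Phi(\PK,\lambda)\|=\prod_i(\lambda_i^\K-\lambda)$ for $\lambda<\lambda_1^{\K^*}\le\lambda_1^\K$; (iii) it avoids the delicate point in the paper's argument that $\mu_j$ depends on $\K$, so that comparing $\phi_{j,\mu_j}$ at $\K$ and at $\K^*$ with a common $\mu_j$ requires care the paper does not spell out. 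Your criticality argument is also sound on both counts: $a_i^{\K^*}=(-1)^{n-i}e_{n-i}(\Lambda^{\K^*})\neq 0$ makes $\|a_i\|$ differentiable at the global minimizer, and the completed square shows the first-order variation of $\PK$ vanishes at $\K^*$, so the chain-rule version works as well. Your diagnosis of why pointwise inequalities $g_\lambda(\K)\ge g_\lambda(\K^*)$ alone cannot force coefficientwise domination is also apt, though to be fair the paper does not attempt that naive step but rather the $\mu_j$ evaluation described above.
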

		\begin{proof}
			For $a_0$ the claim holds by applying Theorem \ref{thm:1} at $\lambda=0$.
			For $j\in \{\dots,n\}$ let $\Psi_j(\lambda) = \sum_{j \neq i}^{n}  a_i^K \lambda^i$. Pick $\mu_j$ such that $\Psi_j(\mu_j)=0$. Then let $\phi_{j,\lambda}(\K):=\frac{|| \Phi(\PK,\lambda)||}{||\mu_j||^j}$. This function $\phi$ has the same critical points and extrema as $g_\lambda$ in Theorem \ref*{thm:1}.  Since $\phi_{j,\mu_j}(\K)=||a_j^\K||$ the first part of the claim follows.
			
			 The second part follows if $\mu_j <\lambda_1^{\K^*}$ for even all $j$.
			Hence let $j\neq 0$ now be an even number. Since all Eigenvalues of $\PK$ are strictly positive and the $a_i^\K$ results from expanding the product $\prod_{i=1}^n(\lambda-\lambda_i^K)$, he signs of the $a_i^\K$ are alternating, meaning $sgn(a_i^\K)= -sgn(a_{i+1}^\K)$ for all $i=0,\dots,n-1$. Since we ultimately care about $|| \Phi(\PK,\lambda)||$ I will assume WLOG that $a_i>0$ for even $i$. Hence $\Psi_j(0)=a_0 = \Phi(\PK,0)>0$ and $\Psi_j(\lambda)=\Phi(\PK,\lambda)-a_j^\K\lambda^j <\Phi(\PK,\lambda)$. Therefore $\Psi_j(\lambda)$ has a root, which we chose to be $\mu_j$, between $0$ and $\lambda_1^\K<\lambda_1^{\K^*}$.
		\end{proof}
		\begin{remark}[Elementary symmetric Polynomials]
			The elementary symmetric Polynomials in $n$ variables $e_1(X_1,\dots,X_n),\dots, e_n(X_1,\dots,X_n)$ are defined as \begin{align}
				e_k(X_1,\dots,X_n) = \sum_{1\le j_1 < \dots < j_k \le n } X_{j_1}\cdots X_{j_k}.
			\end{align}
			Examples are $e_1(X_1,\dots,X_n) = X_1 + \dots X_n$ and $e_n(X_1,\dots,X_n) = X_1\cdots X_n$. They are invariant under permutation of their entries and appear naturally as the coefficients of the characteristic Polynomial: \begin{align}
				\prod_{i=1}^n (\lambda-\lambda_i) = \lambda^n-e_1(\lambda_1,\dots,\lambda_n)\lambda^{n-1}+\dots +(-1)^n e_n(\lambda_1,\dots,\lambda_n). 
			\end{align}
			The previous theorem thus also applies to these elementary symmetrical polynomials evaluated at Eigenvalues of $\PK$.
		\end{remark}
		\begin{corollary}
			The Kalman gain $\K^*$ is a critical point of the map $\K\mapsto Q(\lambda_1^\K,\dots,\lambda_n^\K)$, where $Q$ is an arbirtray symmetric Polynomial, i.e. $Q(X_1,\dots,X_n)=Q(X_{\sigma_1},\dots, X_{\sigma_n})$, where $\sigma \in S_n$ is a permutation.
		\end{corollary}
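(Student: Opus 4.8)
The plan is to reduce the claim to the case of elementary symmetric polynomials, which is already handled by the preceding results, and then to propagate criticality through a polynomial change of variables. The main tool is the \emph{fundamental theorem of symmetric polynomials}: every symmetric polynomial $Q(X_1,\dots,X_n)$ can be written as a polynomial $R$ in the elementary symmetric polynomials, $Q(X_1,\dots,X_n)=R\bigl(e_1(X_1,\dots,X_n),\dots,e_n(X_1,\dots,X_n)\bigr)$. Evaluating at the eigenvalues $\lambda^\K=(\lambda_1^\K,\dots,\lambda_n^\K)$ and recalling from the Remark that $e_k(\lambda^\K)=(-1)^k a_{n-k}^\K$ is, up to sign, a coefficient of the characteristic polynomial, this expresses the target map as $\K\mapsto Q(\lambda^\K)=R\bigl(e_1(\lambda^\K),\dots,e_n(\lambda^\K)\bigr)$.

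First I would record the key input, namely that each map $E_k\colon\K\mapsto e_k(\lambda^\K)$ has $\K^*$ as a critical point. This is exactly the content of the preceding corollary together with the Remark: since all eigenvalues of $\PK$ are strictly positive, every $a_i^\K$, and hence every $e_k(\lambda^\K)$, is nonzero, so being a critical point of $\K\mapsto\|a_i^\K\|$ is equivalent to being a critical point of $\K\mapsto a_i^\K$ itself. Thus $\frac{dE_k}{d\K}\big|_{\K^*}=0$ for every $k=1,\dots,n$.

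Then I would conclude by the chain rule. Writing $E=(E_1,\dots,E_n)$, the target map is $R\circ E$, and
\begin{align}
	\frac{d\,(R\circ E)}{d\K}\Big|_{\K^*} = \sum_{k=1}^n \frac{\partial R}{\partial e_k}\Big|_{E(\K^*)}\,\frac{dE_k}{d\K}\Big|_{\K^*} = 0,
\end{align}
since each factor $\frac{dE_k}{d\K}\big|_{\K^*}$ vanishes. Hence $\K^*$ is a critical point of $\K\mapsto Q(\lambda^\K)$, as claimed.

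The one point demanding care is differentiability: the individual eigenvalues $\lambda_i^\K$ need not be smooth functions of $\K$ where eigenvalues coalesce, so one cannot naively differentiate $Q$ through the $\lambda_i^\K$. The fundamental theorem circumvents this precisely because the symmetric combinations $e_k(\lambda^\K)$ equal the characteristic-polynomial coefficients $a_i^\K$, which are polynomials in the entries of $\PK$ and therefore smooth in $\K$. Equivalently, and perhaps more transparently, one may bypass the elementary symmetric polynomials altogether and note that $Q(\lambda^\K)=\tilde Q(\PK)$ for a polynomial $\tilde Q$ in the matrix entries, so that $\frac{d}{d\K}Q(\lambda^\K)=\langle \nabla_{\PK}\tilde Q,\,\tfrac{d\PK}{d\K}\rangle$ vanishes at $\K^*$ because $\tfrac{d\PK}{d\K}\big|_{\K^*}=0$ — the same identity underlying Theorem \ref{thm:1}. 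I expect this verification of smoothness to be the only genuine obstacle; the algebra thereafter is routine.
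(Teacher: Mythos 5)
Your proof is correct and follows essentially the same route as the paper: reduce $Q$ to a polynomial in the elementary symmetric polynomials via the fundamental theorem, identify these with the characteristic-polynomial coefficients $a_i^\K$ for which the preceding corollary gives criticality, and finish with the chain rule. Your added care in passing from criticality of $\|a_i^\K\|$ to criticality of $a_i^\K$ itself, and in noting that smoothness lives in the coefficients rather than in the individual eigenvalues, addresses points the paper glosses over but does not change the argument.
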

		\begin{proof}
			The non-leading coefficients $a^\K_0,\dots,a^\K_{n-1}$ of the characteristic polynomial $\Phi(\P_K,\lambda)$ are the elementary symmetric polynonials $e_1,\dots,e_n$ evaluated at the Eigenvalues of $\PK$. 
			From the previous corollary we showed that $K^*$ is a critical point for these. By the fundamental theorem of symmetric polynomials there is a polynomial $P(X_1,\dots,X_n)$ such that 
			\begin{align}
Q(\lambda_1^\K,\dots,\lambda_n^\K) = P(e_1(\lambda_1^\K,\dots,\lambda_n^\K),\dots,e_n(\lambda_1^\K,\dots,\lambda_n^\K)).
			\end{align}
			The claim follows by chain rule.
		\end{proof}
		\begin{remark}[Special cases]
			For $\lambda<0$ we can see that $||\Phi(\PK,\lambda)|| = \sum_{i=0}^n |a^\K_i|||\lambda||^i$. The Kalman gain $K^*$ minimizes this function for all $\lambda<\lambda^K_1$. For $\lambda = -1$ it is found that the sum of all $a_i^K$, i.e. the sum of all elementary symmetric Polynomials in Eigenvalues of $\PK$ are minimized by $\K^*$.
			At $\lambda=0$ this evaluates to $a^{\K^*}_0 = \det (\P_{K^*})$ reproducing the result from  \cite{bach2021proof} .
			It is easy to see that $\frac{||||\Phi(\PK,\lambda)||-||\lambda||^n||}{||\lambda||^{n-1}}\rightarrow a^{\K}_{n-1}=\tr (\PK)$ as $\lambda \rightarrow -\infty$. Since $\K^*$ minimizes this along the limiting process the minimizing of $\tr(\PK)$ is rediscovered. 
		\end{remark}

		\bibliography{kalman_gain_proof}{}
		\bibliographystyle{plain}
\end{document}